\DeclareTextFontCommand{\textcyr}{\fontencoding{OT2}
    \fontfamily{wncyr}\fontseries{m}\fontshape{n}\selectfont}
\theoremstyle{plain}
\newtheorem{theorem}{Theorem}[section]
\newtheorem{lemma}[theorem]{Lemma}
\newtheorem{corollary}[theorem]{Corollary}
\theoremstyle{definition}
\newtheorem{definition}[theorem]{Definition}
\newtheorem{construction}[theorem]{Construction}
\newtheorem{notation}[theorem]{Notation}
\title[Real Galois cohomology]
{Galois cohomology of real semisimple groups
}
\author{Mikhail Borovoi and Dmitry A. Timashev}
\address{Borovoi: Raymond and Beverly Sackler School of Mathematical Sciences,
Tel Aviv University, 6997801 Tel Aviv, Israel}
\email{borovoi@post.tau.ac.il}
\address{Timashev:
Lomonosov Moscow State University, Faculty of Mechanics and
Mathematics, Department of Higher Algebra, 119991 Moscow, Russia}
\email{timashev@mech.math.msu.su }
\thanks{Borovoi was partially supported
by the Hermann Minkowski Center for Geometry}
\keywords{Real semisimple group, real Galois cohomology, Kac diagram}
\subjclass[2010]{11E72, 20G10, 20G20}
\begin{document}


\def\noi{\noindent}
\def\R{{\mathbb{R}}}
\def\Z{{\mathbb{Z}}}
\def\C{{\mathbb{C}}}
\def\Q{{\mathbb{Q}}}
\def\F{{\mathbb{F}}}
\def\Xt{{\tilde{X}}}
\def\id{{{\rm id}}}
\def\diag{{\rm diag}}
\def\gcd{{{\rm gcd}}}
\def\lcm{{{\rm lcm}}}
\newcommand{\Img}{\operatorname{Im}}

\def\AA{{\mathbf{A}}}
\def\BB{{\mathbf{B}}}
\def\CC{{\mathbf{C}}}
\def\DD{{\mathbf{D}}}
\def\EE{{\mathbf{E}}}
\def\FF{{\mathbf{F}}}
\def\GG{{\mathbf{G}}}
\def\HH{{\mathbf{H}}}
\def\NN{{\mathbf{N}}}
\def\ZZ{{\mathbf{Z}}}
\def\WW{{\mathbf{W}}}
\def\TT{{\mathbf{T}}}

\def\ad{{\rm ad}}

\newcommand{\X}{{{\sf X}}}

\def\Hom{{\rm Hom}}
\def\G{{\mathbb{G}}}

\def\into{\hookrightarrow}
\newcommand{\isoto}{\overset{\sim}{\to}}

\def\aa{{\boldsymbol{a}}}
\def\bb{{\boldsymbol{b}}}
\def\cc{{\boldsymbol{c}}}
\def\dd{{\boldsymbol{d}}}
\def\ttt{{\mathfrak{t}}}
\def\sss{{\boldsymbol{s}}}
\def\mmu{{\boldsymbol{\mu}}}
\def\nnu{{\boldsymbol{\nu}}}
\def\pp{{\boldsymbol{p}}}
\def\qq{{\boldsymbol{q}}}

\def\arr{\ar@{=>}[r]}
\def\vk{{k}}

\def\zbar{{\overline{z}}}
\def\gbar{{\overline{g}}}
\def\nbar{{\overline{n}}}

\def\ab{{\rm ab}}
\def\Gtil{{\widetilde{G}}}

\newcommand{\labelto}[1]{\xrightarrow{\makebox[1.5em]{\scriptsize ${#1}$}}}

\newcommand{\redu}{\mathrm{red}}
\newcommand{\sem}{\mathrm{ss}}
\newcommand{\scon}{\mathrm{sc}}
\newcommand{\tor}{\mathrm{tor}}

\newcommand{\GL}{{\bf{GL}}}
\newcommand{\SL}{{\bf{SL}}}
\newcommand{\Sp}{{\bf{Sp}}}
\newcommand{\PSp}{{\bf{PSp}}}
\newcommand{\SO}{{{\bf SO}}}
\newcommand{\PSO}{{\bf{PSO}}}
\newcommand{\Spin}{{{\bf Spin}}}
\newcommand{\HSpin}{{\bf{HSpin}}}
\newcommand{\PGL}{{\bf{PGL}}}
\newcommand{\SU}{{\bf SU}}
\newcommand{\PSU}{{\bf PSU}}

\def\Inn{{\rm Inn}}
\def\Aut{{\rm Aut}}

\newcommand{\boxone}{ *+[F]{1} }
\def\ccc{{  \lower0.3ex\hbox{{\text{\Large$\circ$}}}}}
\def\circledS{{\lower0.31ex\hbox{\text{\Large$\bullet$}}}}
\newcommand{\bc}[1]{{\overset{#1}{\ccc}}}
\newcommand{\bcu}[1]{{\underset{#1}{\ccc}}}
\newcommand{\bcb}[1]{{\overset{#1}{\circledS}}}
\newcommand{\bcbu}[1]{{\underset{#1}{\circledS}}}
\newcommand{\sxymatrix}[1]{ \xymatrix@1@R=5pt@C=10pt{#1} }
\newcommand{\vxymatrix}[1]{ \xymatrix@1@R=15pt@C=10pt{#1} }
\newcommand{\rline}{ \ar@{-}[r] }
\newcommand{\lline}{ \ar@{-}[l] }
\newcommand{\dline}{ \ar@{-}[d] }
\newcommand{\upline}{ \ar@{-}[u] }

\newcommand{\mxymatrix}[1]{ \xymatrix@1@R=0pt@C=9pt{#1} }

\def\gSU{{\bf SU}}
\def\AA{{\bf A}}

\def\nbar{{\bar{n}}}

\def\Inn{{\rm Inn}}
\def\Lie{{\rm Lie\,}}

\def\bb{{\boldsymbol{b}}}

\def\Htil{{\widetilde{H}}}
\def\Ttil{{\widetilde{T}}}
\def\im{{\rm im\,}}

\def\emm{\bfseries}

\def\Gal{{\rm Gal}}
\def\H{{\mathbb{H}}}

\def\ppi{{\boldsymbol{\pi}}}

\def\RRR{{\Rightarrow}}
\def\gr{\! > \!}
\def\less{ \!\! < \!\! }
\def\LLL{\!\Leftarrow\!}
\def\boe{{\sxymatrix{\boxone}}}
\def\ll{\!-\!}

\def\jj{{\boldsymbol{j}}}
\def\ii{{\boldsymbol{i}}}

\def\OO{{\mathbf{O}}}
\def\ve{\varepsilon}

\def\sV{{\mathcal{V}}}
\def\xbar{{\bar{x}}}
\def\U{{\bf U}}
\def\sH{{\mathcal{H}}}
\def\sK{{\mathcal{K}}}

\def\vev{\varepsilon^\vee}
\def\onto{\twoheadrightarrow}

\def\gg{{\mathfrak{g}}}
\def\Ad{{\rm Ad}}

\def\coker{{\rm coker\,}}
\def\tors{{\rm tors}}
\def\Ext{{\rm Ext}}
\def\Stab{{\rm Stab}}

\def\half{{\tfrac{1}{2}}}
\def\nth{{\tfrac{1}{n}}}

\def\Ss{\sideset{}{'}\sum_{k\succeq i}}
\def\Sst{\Ss}
\def\Ssd{\sideset{}{'}\sum_{k\succeq i,\,k\in D^\tau}}

\def\hs{\kern 0.8pt}
\def\hh{\kern 1.0pt}

\def\sc{{\rm sc}}
\def\Dtil{{\widetilde{D}}}

\def\even{{\rm even}}
\def\odd{{\rm odd}}
\def\Orb{{\rm Orb}}

\newcommand{\xx}{{\bf x}}

\def\Pitil{{\widetilde{\Pi}}}
\def\vvk{{(\vk)}}

\def\ds{\displaystyle}

\newcommand\two[2]{\underset{\ds#2}{#1}}
\newcommand\three[3]{\underset{\ds#3}{\two{#1}{#2}}}
\newcommand\four[4]{\underset{\ds#4}{\three{#1}{#2}{#3}}}
\newcommand\five[5]{\underset{\ds#5}{\four{#1}{#2}{#3}{#4}}}

\newcommand{\gf}[2]{\genfrac{}{}{0pt}{}{#1}{#2}}



\begin{abstract}
Let $\GG$ be a connected, compact, semisimple algebraic group over the field of real numbers $\R$.
Using Kac diagrams, we describe combinatorially the first Galois cohomology sets $H^1(\R,\HH)$ for all inner forms $\HH$ of $\GG$.
As examples, we compute explicitly $H^1$ for all real forms of the simply connected simple group of type $\EE_7$ (which has been known since 2013)
and for all real forms of half-spin groups of type $\DD_{2k}$ (which seems to be new).
\end{abstract}

\maketitle


\setcounter{section}{-1}

\section{Introduction}

Let $\HH$ be a linear algebraic group defined over the field of real numbers $\R$.
For the definition of the  first (nonabelian) Galois cohomology set $H^1(\R,\HH)$ see Section \ref{s:cohomology} below.
Galois cohomology can be used to answer many natural questions (on classification of real forms, on the connected components
of the set of $\R$-points of a homogeneous space etc.).
The  Galois cohomology sets $H^1(\R,\HH)$ of the classical groups are well known.
Recently the sets $H^1(\R,\HH)$ were computed for ``most'' of the {\em simple} $\R$-groups  by Adams \cite{A},
in particular, for all {\em simply connected} simple $\R$-groups by Adams \cite{A} and by Borovoi and Evenor \cite{BE}.

Victor G. Kac \cite{Kac} used what was later called Kac diagrams
(see Onishchik and Vinberg \cite[Sections 3.3.7 and 3.3.11]{OV2}\hs) to classify the conjugacy classes of automorphisms of finite order
of a simple Lie algebra over the field of complex numbers $\C$.
Let $\GG$ be a {\em compact} (anisotropic), simply connected,  simple algebraic group over $\R$.
Write $\GG_\C=\GG\times_\R \C$, $\gg_\C=\Lie(\GG_\C)$.
With this notation, Kac classified the  conjugacy classes of elements of order $n$ in $\Aut\,\gg_\C=\Aut\,\GG_\C$.
In particular, he classified the conjugacy classes of elements of order $n$ in the group of inner automorphisms $\GG^\ad(\C)\subset \Aut\,\GG_\C$,
where $\GG^\ad:=\GG/\ZZ_\GG$ is the corresponding adjoint group.
Equivalently,  he classified  the conjugacy classes of elements of order $n$ in $\GG^\ad(\R)$.

Note that the set of conjugacy classes of elements of order $n=2$ in $\GG^\ad(\R)$ is in canonical bijection with the first Galois cohomology set
$H^1(\R,\GG^\ad)$, see Serre \cite[Section III.4.5, Theorem 6]{Serre}.
Thus Kac computed $H^1(\R, \GG^\ad)$, the Galois cohomology of the compact, simple, {\em adjoint} $\R$-group $\GG^\ad$.

In the present paper we use the method of Kac diagrams in order to compute $H^1(\R,\GG)$, or more generally $H^1(\R,\hs_\qq \GG)$,
where $\GG$ is a connected, compact, {\em semisimple} $\R$-group, not necessarily adjoint,
and $_\qq \GG$ is the {\em inner} twisted form of $\GG$ corresponding to a Kac diagram $\qq$.
This is reduced to classifying conjugacy classes of square roots of a given central element $z=z_\qq\in\GG(\R)$.

The plan of the paper is as follows.
In Section \ref{s:notation} we introduce the necessary notation.
In Section \ref{s:action} we describe, following Bourbaki \cite{Bourbaki}, the action of  $P^\vee/Q^\vee$ on the extended Dynkin diagram of a root system $R$,
where $P^\vee$ is the coroot lattice and $Q^\vee$ is the coweight lattice.
The heart of the paper is Section \ref{s:roots}, where we prove Theorem \ref{thm:p-z} describing the conjugacy classes of $n$-th roots of a given central element $z$
in a connected semisimple compact Lie group $G$ in terms of certain combinatorial objects called {\em Kac $n$-labelings of the extended Dynkin diagram $\Dtil$ of $G$.}
Using this theorem (in the case $n=2$) and a result of  \cite{Bo}, in Section \ref{s:cohomology} we prove  \  Theorem \ref{t:main}, which is the main result of this paper.
It describes the  first Galois cohomology set $H^1(\R,\hs_\qq \GG)$
of an  inner twisted form $_\qq \GG$ of a connected compact (anisotropic) semisimple $\R$-group $\GG$ in terms of  Kac 2-labelings.
As an example,  in Section \ref{s:E7} we compute, using Kac 2-labelings,  the Galois cohomology sets $H^1(\R,\,_\qq \GG)$
for all $\R$-forms $_\qq \GG$ of the compact simply connected group $\GG$ of type $\EE_7$;
these results  were obtained earlier by other methods in \cite{A} and  \cite{BE}, see also Conrad \cite[Proof of Lemma 4.9]{Conrad}.
As another example,  in Section \ref{s:half-spin} we compute the Galois cohomology sets $H^1(\R,\,_\qq \GG)$
for all  $\R$-forms of a half-spin compact group of type $\DD_{\ell}$ for even $\ell>4$;
these results seem to be new.

The authors are grateful to E.\,B.~Vinberg, whose for e-mail correspondence with the first-named author in 2008 inspired this paper.

\section{Notation}
\label{s:notation}

In this paper $\GG$ always is a connected, compact (anisotropic), semisimple algebraic group over the field of real numbers $\R$.
We write $\ZZ_\GG$ for the center of $\GG$.
Let $\GG^\ad=\GG/\ZZ_\GG$ denote the corresponding adjoint group, and let $\GG^\sc$
denote the universal covering of $\GG$ (which is simply connected).
Let $\TT\subset\GG$ be a maximal torus.
We denote by $\ttt$ the Lie algebra of $\TT$, which is a vector space over $\R$.
Let $\NN=\mathcal{N}_\GG(\TT)$  denote the normalizer of $\TT$ in $\GG$.
Let $\WW=\NN/\TT$ be the Weyl group, which is a finite algebraic group.

Let $\TT^\ad:=\TT/\ZZ_\GG$ be the image of $\TT$ in $\GG^\ad$, and let $\TT^\sc$ denote the preimage of $\TT$ in $\GG^\sc$.
Then $\TT^\ad$ is a maximal torus in $\GG^\ad$, and $\TT^\sc$ is a maximal torus in $\GG^\sc$.
Set
\[ X=\X(\TT_\C):=\Hom(\TT_\C,\G_{m,\C}),\quad  X^\vee=\X^\vee(\TT_\C):=\Hom(\G_{m,\C},\TT_\C), \]
where $\TT_\C=\TT\times_\R \C$ and $\G_{m,\C}$ is the multiplicative group over $\C$; then $X$ and $X^\vee$ are the character group and the cocharacter group of $\TT_\C$, respectively.

We have a canonical isomorphism of abelian complex Lie groups
$$
X^\vee\underset{\Z}{\otimes} \C^\times\isoto \TT(\C),\quad \chi\otimes u\mapsto \chi(u),\quad \chi\in X^\vee,\ u\in \C^\times=\G_{m,\C}(\C).
$$
Thus we obtain an isomorphism of abelian complex Lie algebras (vector spaces over $\C$)
\begin{align*}
X^\vee \underset{\Z}{\otimes} \C \isoto \Lie \TT_\C,\quad \chi\otimes v\mapsto d\chi(v),\quad
&\chi\in X^\vee,\ v\in\C, \\
&d\chi:=d_1\chi\colon \C=\Lie\G_{m,\C}\to \Lie \TT_\C\,.
\end{align*}
We obtain the standard embedding
\begin{equation*}
X^\vee\into X^\vee \underset{\Z}{\otimes} \C \isoto \Lie \TT_\C\,, \quad
\chi\mapsto \chi\otimes 1\mapsto d\chi(1).
\end{equation*}

As usual, we set
\[ P=\X(\TT^\sc_\C),\quad Q=\X(\TT^\ad_\C); \]
these are the weight lattice and the root lattice.
We set also
\[ P^\vee=\X^\vee(\TT^\ad_\C),\quad Q^\vee=\X^\vee(\TT^\sc_\C); \]
these are the coweight lattice and the coroot lattice.
Then
\[ Q\subset X\subset P\quad\text{and} \quad Q^\vee\subset X^\vee\subset P^\vee. \]

Let $\GG$ and $\TT$ be as above.
We write $G=\GG(\R)$ for the set of $\R$-points of $\GG$, and similarly we write $G^\ad=\GG^\ad(\R)$, $G^\sc=\GG^\sc(\R)$.
We write $T=\TT(\R)$, and similarly we write $T^\ad=\TT^\ad(\R)$, $T^\sc=\TT^\sc(\R)$.
We write $N=\NN(\R)$ and  $W=\WW(\R)$.
We write $Z_G=\ZZ_\GG(\R)$ for the center of $G$.

We define an action of the group $X^\vee\rtimes W$ on the set $\ttt$ as follows:
an element $\chi\in X^\vee\subset \ttt_\C$ acts by translation by $\ii\hs \chi\in\ttt$
(where $\ii^2=-1$),
and $w\in W\subset\Aut\, \TT$ acts on  $\ttt=\Lie \TT$ as usual, i.e., as $d_1 w\colon \Lie \TT\to\Lie \TT$.
It follows that the groups $Q^\vee\rtimes W$ and $P^\vee\rtimes W$ act on $\ttt$.

Let $R=R(\GG_\C,\TT_\C)$ denote the root system of $\GG_\C$ with respect to $\TT_\C$.
Let $\Pi\subset R$ be a basis (a system of simple roots).
Let $D=D(\GG,\TT,\Pi)=D(R,\Pi)$ denote the Dynkin diagram;
the set of the vertices  of $D$ is $\Pi$.

Assume that $\GG$ is (almost) simple.
We write $\Pi=\{\alpha_1,\dots,\alpha_\ell\}$.
Let $\Dtil=\Dtil(\GG,\TT,\Pi)=\Dtil(R,\Pi)$  denote the extended Dynkin diagram;
the set of vertices of $\Dtil$ is   $\Pitil=\{\alpha_1,\dots,\alpha_\ell,\alpha_0\}$,
where $\alpha_1,\dots,\alpha_\ell$ are the simple roots, and $\alpha_0$ is the lowest root.
These roots $\alpha_1,\dots,\alpha_\ell, \alpha_0$ are linearly dependent, namely,
\begin{equation}\label{eq:sum-mj}
m_{\alpha_1}\alpha_1+\dots+m_{\alpha_\ell}\alpha_\ell+m_{\alpha_0}\alpha_0=0,
\end{equation}
where the coefficients $m_{\alpha_j}$ are positive integers for all $j=1,\dots,\ell,0$ and where $m_{\alpha_0}=1$.
We write $m_j$ for $m_{\alpha_j}$.
These coefficients $m_j$ are tabulated in \cite[Table 6]{OV} and in \cite[Table 3]{OV2}.

Now assume that $\GG$ is semisimple, not necessarily simple.
Then we have a decomposition $\GG=\GG^{(1)}\cdot \GG^{(2)}\cdots \GG^{(r)}$
into an almost direct product of simple groups.
Then $\TT=\TT^{(1)}\cdot \TT^{(2)}\cdots \TT^{(r)}$ (an almost direct product of tori),
where each $\TT^{(\vk)}$ is a maximal torus in $\GG^{(\vk)}$ ($\vk=1,\dots, r$).
We write $\ttt=\Lie \TT$, $\ttt^{(\vk)}=\Lie \TT^{(\vk)}$,
then
\[ \ttt=\ttt^{(1)}\oplus\dots\oplus\ttt^{(r)}. \]
 The root system $R$  decomposes into a ``direct sum'' of irreducible root systems
\[
R=R^{(1)}\sqcup\dots\sqcup R^{(r)}
\]
(disjoint union), where $R^\vvk=R(\GG_\C^\vvk,\TT_\C^\vvk)$,
and we have
\[\Pi=\Pi^{(1)}\sqcup\dots\sqcup \Pi^{(r)},\]
 where each subset $\Pi^{(\vk)}$ ($\vk=1,\dots,r$) is a basis of $R^{(\vk)}$.
We have
\[
D=D^{(1)}\sqcup\dots\sqcup D^{(r)},
\]
where each connected component  $D^{(\vk)}$ ($\vk=1,\dots,r$)
is the Dynkin diagram of the irreducible root system $R^{(\vk)}$ with respect to $\Pi^{(\vk)}$.
Let $\alpha_0^\vvk\in R^\vvk$ denote the lowest root of $R^\vvk$.
Let $\Dtil^\vvk$ denote the extended Dynkin diagram of $R^\vvk$ with respect to $\Pi^\vvk$,
then the set of vertices of $\Dtil^\vvk$ is $\Pitil^\vvk:=\Pi^\vvk\cup\{\alpha_0^\vvk\}$.
We define the extended Dynkin diagram of $R$ with respect to $\Pi$ to be
\begin{equation*}
\Dtil=\Dtil^{(1)}\sqcup\dots\sqcup \Dtil^{(r)};
\end{equation*}
then the set of vertices of $\Dtil$ is
\[\Pitil=\Pitil^{(1)}\sqcup\dots\sqcup \Pitil^{(r)}=\Pi\sqcup\Pitil_0,\]
where $\Pitil_0=\{\alpha_0^{(1)},\dots,\alpha_0^{(r)}\}$.
For each $\vk=1,\dots,r$, let $(m_\beta)_{\beta\in\Pitil^\vvk}$ be the coefficients of linear dependence
\[ \sum_{\beta\in\Pitil^\vvk} m_\beta \beta=0\]
normalized so that $m_{\alpha_0^\vvk}=1$.
Then $m_\beta\in\Z, m_\beta\ge 0$ for any $\beta\in\Pitil$.

\section{Action of $P^\vee/Q^\vee$ on the extended Dynkin diagram}
\label{s:action}

First let  $\GG$ be a  {\em simple} compact $\R$-group.
Recall that $\ttt$ denotes the Lie algebra of $\TT$.
Following \cite[Section 3.3.6]{OV2}, we introduce the {\em barycentric coordinates}
$x_{\alpha_1},\dots,x_{\alpha_\ell},x_{\alpha_0}$ of a point  $x\in \ttt$ by setting
\begin{equation*}
d\alpha_j(x)=\ii x_{\alpha_j}\text{ \ for }j=1,\dots,\ell,\quad d\alpha_0(x)=\ii  (x_{\alpha_0}-1),
\end{equation*}
where $\ii^2=-1$. We write $x_j$ for $x_{\alpha_j}$.
By \eqref{eq:sum-mj} we have
\[
0=\left(\sum_{j=0}^\ell m_j\, d\alpha_j\right)(x)=\ii\left(-1+ \sum_{j=0}^\ell m_j x_j\right),
\]
hence
\begin{equation}\label{eq:app-sum-2}
\sum_{j=0}^\ell m_j x_j=1.
\end{equation}
By \cite[Section VI.2.1]{Bourbaki} and \cite[Section VI.2.2, Proposition 5(i)]{Bourbaki},
see also \cite[Section 3.3.6, Proposition 3.10(2)]{OV2},
the closed simplex $\Delta\subset\ttt$ given by the inequalities
\begin{equation*}
x_1\ge 0,\ \dots,\ x_n\ge 0,\ x_0\ge 0
\end{equation*}
is a fundamental domain for the affine Weyl group  ${Q^\vee}\rtimes W$,
where $W$ is the usual Weyl group.
This means that  every orbit of ${Q^\vee}\rtimes W$ intersects $\Delta$ in one and only one point.

Now let $\GG$ be a semisimple (not necessarily simple) compact $\R$-group.
We introduce  the barycentric coordinates $(x_\beta)_{\beta\in\Pitil}$ of $x$ defined by
\begin{equation*}
 d\beta(x)=\ii x_\beta\text{ for }\beta\in\Pi, \quad d\beta(x)=\ii (x_\beta-1) \text{ for }\beta\in\Pitil_0=\Pitil\smallsetminus\Pi,
\end{equation*}
they satisfy
\begin{equation*}
  \sum_{\beta\in\Pitil^{(\vk)}} m_\beta x_\beta=1\quad\text{for each }\vk=1,\dots,r,
\end{equation*}
see \eqref{eq:app-sum-2}.
Write $\ttt=\bigoplus_{\vk=1}^r\ttt_\vk$.
For each $\vk=1,\dots,r$,
let $\Delta^\vvk$ denote the closed simplex in $\ttt^\vvk$ given by the inequalities
\[x_\beta\ge0\quad\text{for }\beta\in\Pitil^\vvk.\]
Then the product $\Delta=\prod_{\vk=1}^r \Delta^{(\vk)}$ is the closed subset in $\ttt$ given by the inequalities
\[x_\beta\ge0\quad\text{for }\beta\in\Pitil,\]
and $\Delta$ is a fundamental domain for the affine Weyl group  $Q^\vee\rtimes W$ in $\ttt$,
acting as in Section \ref{s:notation}.
Again, this means that  every orbit of ${Q^\vee}\rtimes W$ intersects $\Delta$ in one and only one point.

The group $(X^\vee\rtimes W)/(Q^\vee\rtimes W)=X^\vee/Q^\vee\simeq \pi_1(G)$ acts on $\Delta$.
We wish to describe this action.
Since $X^\vee/Q^\vee\subset P^\vee/Q^\vee$, it suffices to describe the action of $P^\vee/Q^\vee$,
and it suffices to consider the case when $R$ is irreducible.

From now on till the end of this section we assume that $R$ is an irreducible root system.
The action of $P^\vee/Q^\vee$ on $\Delta$
is given by permutations of coordinates corresponding to a subgroup
of the automorphism group of the extended Dynkin diagram
acting simply transitively on the set of vertices $\alpha_j$ with $m_j=1$.
This action is described in \cite[Section VI.2.3, Proposition 6]{Bourbaki}.

Namely, let $\omega_1^\vee, \dots,\omega_\ell^\vee$ denote
the set  of fundamental coweights,
i.e., the basis of $P^\vee$ dual to the basis $\alpha_1,\dots,\alpha_\ell$ of $Q$.
Then the nonzero cosets of $P^\vee/Q^\vee$ are represented by the fundamental coweights $\omega_j^\vee$ such that $\ii\omega_j^\vee$   belongs to $\Delta$,
i.e., by those $\omega_j^{\vee}$ with $m_j=1$.
Let $w_0$, resp.\ $w_j$, denote the longest element in $W$, resp.\
in the Weyl group $W_j$ of the root subsystem $R_j$ generated by $\Pi\smallsetminus\{\alpha_j\}$.
Then the transformation
\begin{equation} \label{e:coset}
x\mapsto w_j w_0 x+\ii\omega_j^{\vee}
\end{equation}
preserves $\Delta$ whenever $m_j=1$ and gives the action of the respective coset $[\omega_j^\vee]\in P^\vee/Q^\vee$ on $\Delta$.

Observe that the affine transformation \eqref{e:coset} is an isometry of the Euclidean structure on $\ttt$ given by the restriction of the Killing form. Hence the action of $[\omega_j^\vee]$ preserves the Euclidean polytope structure of the simplex $\Delta$. In particular, it permutes the vertices of $\Delta$, which are equal to $v_i=\ii\omega_i^\vee/m_i$ ($i=1,\dots,\ell$) and $v_0=0$, and the facets $\Delta_i$ of $\Delta$, which correspond to the roots $\alpha_i\in\Pitil$ ($i=1,\dots,\ell,0$), preserving the angles between the facets.
Hence the action of $[\omega_j^\vee]$ induces a permutation $\sigma=\sigma_j$ of the set $\{1,\dots,\ell,0\}$ such that the facet $\Delta_i$ maps to $\Delta_{\sigma(i)}$, and the opposite vertex $v_i$ is mapped to $v_{\sigma(i)}$. In particular, $\sigma_j$ takes $0$ to $j$.

Since the relative lengths of the roots in $\Pitil$ and the angles between them and between the respective facets of $\Delta$ are read off from the extended Dynkin diagram $\Dtil$, the permutation $\sigma$ comes from an automorphism of $\Dtil$. Furthermore, the action of $[\omega_j^\vee]$ permutes the barycentric coordinates $x_i$ of a point $x\in\Delta$, because they are determined by the vertices $v_i\in\Delta$. Namely, any $x\in\Delta$ is mapped to $x'\in\Delta$ with coordinates $x'_i=x_{\sigma^{-1}(i)}$. One obtains an action of $P^\vee/Q^\vee$ on $\Dtil$, which we describe below explicitly case by case, using \cite[Planches I-IX, assertion (XII)]{Bourbaki}.

If $\GG$ is of one of the types $\EE_8,\ \FF_4, \GG_2$, then $P^\vee/Q^\vee=0$.
If $\GG$ is of one of the types $\AA_1,\ \BB_\ell\ (\ell\ge 3),\ \CC_\ell\ (\ell\ge 2),\ \EE_7$, then $P^\vee/Q^\vee\simeq\Z/2\Z$,
and the nontrivial element $P^\vee/Q^\vee$ acts on $\Dtil$ by the only nontrivial automorphism of $\Dtil$:
\begin{equation*}
\xymatrix@1@R=-5pt@C=15pt{
&& & \bc{0}\ar@{-}[rd]  \ar@/_0.7pc/@{.}[dd] &&\\
 \bc{0}\ar@<-2pt>@2{-}[r] \ar@<+2pt>@2{-}[r] \ar@/^1.0pc/@{.}[r] &\bc{1}
&& & \bc{2}\rline & \cdots & \lline \bc{\ell-1} \ar@{=>}[r] & \bc{\ell}&&
\bc{0}\ar@{=>}[r] \ar@/^1.5pc/@{.}[rrrr]
&\bc{1} \rline& \cdots & \lline \bc{\ell-1} & \ar@{=>}[l] \bc{\ell}\\
&& & \bcu{1} \ar@{-}[ru]}
\end{equation*}
\begin{equation*}
\mxymatrix{  \bc{1} \rline \ar@/^1.8pc/@{.}[rrrrrr]
&  \bc{2} \rline &  \bc{3} \rline &  \bc{4}
\rline \dline &  \bc{5} \rline &  \bc{6}\rline &\bc{0} \\
& & & \bcu{7} & & }
\end{equation*}

It remains to consider the cases $\AA_\ell$ $(\ell\ge 2)$, $\DD_\ell$ and $\EE_6$.
In order to describe the action of the group $P^\vee/Q^\vee$ on  $\Dtil$, it suffices to describe its action on the set of vertices $\alpha_j$ of $\Dtil$ with $m_j=1$.
These are the images of $\alpha_0$ under the automorphism group of $\Dtil$.
\medskip

Let $D$ be of type $\AA_\ell$, $\ell\ge2$.
The generator $[\omega_1^\vee]$ of $P^\vee/Q^\vee$ acts on $\Dtil$ as the
cyclic permutation $0\mapsto 1\mapsto\dots\mapsto\ell-1\mapsto\ell\mapsto 0$:
\[\sxymatrix{&&\bc{0}\ar@{->}[dll] \ar@{<-}[drr] \\\bcu{1} \ar@{->}[r] &\bcu{2}\ar@{->}[r] &\cdots\ar@{->}[r] &\bcu{\ell-1}\ar@{->}[r]& \bcu{\ell} }\]
\bigskip

Let $D$ be of type $\DD_\ell$, $\ell\ge 4$ is even.
We have $P^\vee/Q^\vee\simeq\Z/2\Z\oplus\Z/2\Z$,
and  the classes $[\omega_1^\vee]$ and $[\omega_{\ell-1}^\vee]$ are generators of    $P^\vee/Q^\vee$.
These generators act on $\Dtil$ as follows: $[\omega_1^\vee]$ acts as \ $0\leftrightarrow1$, \ \ $\ell-1\leftrightarrow\ell$, \
and $[\omega_{\ell-1}^\vee]$ acts as \ $0\leftrightarrow\ell-1$, \ \ $1\leftrightarrow\ell$:
\begin{equation*}
\xymatrix@1@R=-5pt@C=10pt{
\bc{0} \ar@{-}[rd]  \ar@/_0.7pc/@{.}[dd]
& & [\omega_1^\vee] & &\bc{\ell-1}\ar@{-}[ld]  \ar@/^0.7pc/@{.}[dd]\\
&\bc{2}\rline & \cdots  \rline & \bc{\ell-2}\\
\bcu{1}\ar@{-}[ru] & &  & &\bcu{\ell}\ar@{-}[lu] }
\qquad\qquad
\xymatrix@1@R=-5pt@C=10pt{
\bc{0} \ar@{-}[rd]\ar@/^1pc/@{.}[rrrr]
&&[\omega_{\ell-1}^\vee] &&\bc{\ell-1}\ar@{-}[ld] \\
&\bc{2}\rline & \cdots  \rline & \bc{\ell-2}\\
\bcu{1}\ar@{-}[ru] \ar@/_0.9pc/@{.}[rrrr]
 & &  & &\bcu{\ell}\ar@{-}[lu] }
\end{equation*}
\medskip

Let $D$ be of type $\DD_\ell$, $\ell\ge 5$ is odd.
We have $P^\vee/Q^\vee\simeq\Z/4\Z$, and the class $[\omega_{\ell-1}^\vee]$
is a generator of $P^\vee/Q^\vee$.
This generator acts on $\Dtil$ as  the 4-cycle $0\mapsto\ell-1\mapsto1\mapsto\ell\mapsto0$:
 \[   \xymatrix@1@R=-5pt@C=10pt{
&& \bc{0} \ar@{-}[rd]\ar@/^0.75pc/@{-->}[rrrr] &&  &&\bc{\ell-1}\ar@{-}[ld] \ar@{-->} `l[lld]  `[lldd]  [lllldd] \\
&&&\bc{2}\rline & \cdots  \rline & \bc{\ell-2} \\
&&\bcu{1}\ar@{-}[ru] \ar@/_0.7pc/@{-->}[rrrr] & &  & &\bc{\ell}\ar@{-}[lu] \ar@{--2>} `d[l] `l[lllll] `[uu]  [lllluu] \\
}   \]
 \bigskip

Let $D$ be of type $\EE_6$.
The generator $[\omega_1^{\vee}]\in P^\vee/ Q^{\vee}$  acts as the 3-cycle $0\mapsto1\mapsto5\mapsto0$:
\medskip

\begin{equation*}
\xymatrix@1@R=5pt@C=10pt{ \bc{1} \rline \ar@/^1.7pc/@{-->}[rrrr]
& \bc{2} \rline & \bc{3} \rline \dline & \bc{4} \rline & \bc{5} \ar@/^1pc/@{-->}[lldd] \\
& &  {\phantom{\hbox{\SMALL\hs 6}}}\ccc \hbox{\SMALL\hs 6}\dline & & \\
&&\bcu{0} \ar@/^1pc/@{-->}[lluu]
}
\end{equation*}

\section{$n$-th roots of a central element}
\label{s:roots}

Let $\GG$ a  compact semisimple $\R$-group, not necessarily simple. Let $\TT$, $G$, $T$, $X$, $D$, $\Dtil$, ets. be as in Section \ref{s:notation}.

Let $z\in Z_G$ and  let $n$ be a positive integer. We consider the set of $n$-th roots of $z$ in $G$
\[
G_n^z:=\{g\in G\ |\ g^n=z\}.
\]
In particular, $G_n:=G_n^1$ is the set of $n$-th roots of 1 in $G$, i.e., the set of elements of order dividing $n$ in $G$.

The group $G$ acts on $G_n^z$ on the left by conjugation  $g * a=g a g^{-1}$ ($g\in G,\ a\in G_n^z$).
We wish to compute the set $G_n^z/\!\!\!\sim$ of $n$-th roots of $z$ modulo conjugation.

Consider the set $T_n^z\subset G_n^z$ (note that $z\in Z\subset T$). The group $W$ acts on $T_n^z$ on the left by
\begin{equation}\label{e:ast}
w * t=ntn^{-1},
\end{equation}
where $w=nT\in W,\ n\in N,\ t\in T$.
It is easy to see  that the embedding  $T_n^z\into G_n^z$ induces a bijection $T_n^z/W\isoto G_n^z/\!\!\!\sim$.
Thus we wish to compute  $T_n^z/W$.

We describe the set $T_n^z/W$ in terms of {\em Kac $n$-labelings of $\Dtil$}.

\begin{definition}
A {\em Kac $n$-labeling} of an extended Dynkin diagram $\Dtil=\Dtil^{(1)}\sqcup\dots\sqcup \Dtil^{(r)}$,
where each $\Dtil^{(\vk)}$ is connected for $\vk=1,\dots,r$,
is a family of nonnegative integer  numerical labels $\pp=(p_\beta)_{\beta\in \Pitil}\in \Z_{\ge 0}^\Pitil$
at the vertices $\beta\in\Pitil$  of $\Dtil$ satisfying
\begin{equation}\label{e:vk-p-0}
  \sum_{\beta\in\Pitil^{(\vk)}} m_\beta \hs p_\beta=n\quad\text{for each }\vk=1,\dots,r.
  \end{equation}
\end{definition}

Note that a Kac $n$-labeling $\pp$ of $\Dtil=\Dtil^{(1)}\sqcup\dots\sqcup \Dtil^{(r)}$ is the same as
a family $(\pp^{(1)},\dots \pp^{(r)})$, where each $\pp^\vvk$ is a Kac $n$-labeling of $\Dtil^\vvk$.

 Let $z\in Z_G\subset T$.
We write
\begin{equation}\label{e:l-z}
{z}=\exp\,2\pi\ii \zeta,\quad \text{where }\ \zeta\in\ttt_\C.
\end{equation}
For $\lambda\in X$ consider $d\lambda(\zeta)\in \C$. We have
\begin{equation}\label{e:z}
\exp 2\pi\ii\, d\lambda(\zeta)=\exp d\lambda(2\pi\ii \zeta)
=\lambda(\exp 2\pi\ii \zeta)=\lambda(z).
\end{equation}
Since $z$ is an element of finite order in $T$, we see that
$\lambda(z)$ is a root of unity, hence  by \eqref{e:z} $d\lambda(\zeta)\in \Q$,
and it follows from \eqref{e:z} that the image of $d\lambda(\zeta)$ in $\Q/\Z$ depends only on $z$,
and  not on the choice of $\zeta$.
Note that if $\lambda\in Q\subset X$, then $\lambda(z)=1$, hence $d\lambda(\zeta)\in\Z$.

\begin{notation}\label{n:sK}
We denote by $\sK_n$ the set of Kac $n$-labelings of $\Dtil$,
i.e., the set of $\pp=(p_\beta)\in\Z_{\ge0}^\Pitil$ satisfying \eqref{e:vk-p-0}.
We denote by $\sK_{n,\R}$  the set of families $\pp=(p_\beta)\in \R_{\ge0}^\Pitil$ satisfying \eqref{e:vk-p-0}, i.e., the set of tuples of barycentric coordinates of points in $n\Delta$.
For $z\in Z_G$, we denote by $\sK_n^z$ the set of Kac $n$-labelings $\pp\in\sK_n$  of $\Dtil$ satisfying
\begin{align}\label{eq:cond-z}
&\sum_{\alpha\in\Pi}c_\alpha p_\alpha\equiv d\lambda(\zeta)\pmod{\Z} \\
&\text{for any generator } [\lambda] \text{ of } X/Q   \text{ with \ }
\lambda=\sum_{\alpha\in\Pi}c_\alpha \alpha\, ,\nonumber
\end{align}
where $\zeta$ is as in \eqref{e:l-z}.
Condition \eqref{eq:cond-z} does not depend on the choice of $\zeta$ satisfying \eqref{e:l-z}.
We have $\sK_n^z\subset \sK_n\subset \sK_{n,\R}$.
The group $X^\vee/Q^\vee$ acts on $\sK_{n,\R}$ and $\sK_n$ via the action on $\Dtil$.
We shall see below that the subset $\sK_n^z$ of $\sK_n$ is $X^\vee/Q^\vee$-invariant.
\end{notation}

\begin{construction}\label{constr:varphi}
Let $\pp=(p_\beta)\in\sK_{n,\R}$.
Set
\[\xx=(x_\beta)_{\beta\in\Pitil}:=(p_\beta/n)_{\beta\in\Pitil}\in\sK_{1,\R},\]
then there exists a point $x\in\Delta\subset \ttt$ with barycentric coordinates $(x_\beta)_{\beta\in\Pitil}$.
We set
\[ \varphi(\pp)=e(x):=\exp\hs 2\pi x\in T. \]
\end{construction}

The following theorem gives a combinatorial description of the set $T^z_n/W$ in terms of Kac $n$-labelings.
It generalizes a result of Kac \cite{Kac}, who described, in particular, the set $T_n/W$ in the case when $\GG$ is an adjoint group.

\begin{theorem}\label{thm:p-z}
Let $\GG$ be a compact semisimple $\R$-group,
$\TT\subset \GG$ be a maximal torus, $R=R(\GG_\C,\TT_C)$ be the corresponding root system,
$\Pi$ be a basis of $R$,
$\Dtil=\Dtil(\GG,\TT,\Pi)$ be the corresponding extended Dynkin diagram.
Let $n$ be a positive integer.
Let $z\in Z_G$ be a central element.
Then the subset $\sK_n^z\subset\sK_n$ is $X^\vee/Q^\vee$-invariant, and
the map $\varphi\colon \sK_{n,\R}\to T$ of Construction \ref{constr:varphi} induces a bijection
\begin{equation}\label{e:varphi-ast}
\varphi_*\colon \sK_n^z/(X^\vee/Q^\vee)\isoto T_n^z/W
\end{equation}
between the set of $X^\vee/Q^\vee$-orbits in $\sK_n^z$
and  the set  of $W$-orbits in $T_n^z$.
\end{theorem}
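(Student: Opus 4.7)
The plan is to lift the problem to $\ttt$ via the exponential $e(x) = \exp 2\pi x$ of Construction \ref{constr:varphi}, which is a $W$-equivariant surjective homomorphism $\ttt \onto T$ with kernel $\ii X^\vee$. Choose $y \in \ttt$ with $e(y) = z$. Then $t = e(x)$ satisfies $t^n = z$ iff $nx - y \in \ii X^\vee$, so the substitution $u = nx$ gives a bijection $(y + \ii X^\vee)/\ii n X^\vee \isoto T_n^z$, $u \mapsto e(u/n)$. Centrality of $z$ forces $wy - y \in \ii X^\vee$ for every $w \in W$, so $y + \ii X^\vee$ is stable under $\ii n X^\vee \rtimes W$ and
\[
T_n^z/W \;\cong\; (y + \ii X^\vee)/(\ii n X^\vee \rtimes W).
\]

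By Section \ref{s:action}, the simplex $n\Delta$ is a fundamental domain in $\ttt$ for the normal subgroup $\ii n Q^\vee \rtimes W$ of $\ii n X^\vee \rtimes W$, and the quotient $X^\vee/Q^\vee$ acts on $n\Delta$ as the $n$-rescaling of its action on $\Delta$ described there; that action permutes the barycentric coordinates according to the action on $\Dtil$. The assignment of barycentric coordinates gives a bijection $\sK_{n,\R} \leftrightarrow n\Delta$, $\pp \leftrightarrow u = n x_\pp$, intertwining the $X^\vee/Q^\vee$-action on $\sK_{n,\R}$ from Notation \ref{n:sK} with the action on $n\Delta$, and the map $u \mapsto e(u/n)$ composed with this bijection is exactly $\varphi$. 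Consequently
\[
\ttt/(\ii n X^\vee \rtimes W) \;=\; n\Delta/(X^\vee/Q^\vee) \;=\; \sK_{n,\R}/(X^\vee/Q^\vee).
\]

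It remains to identify $\sK_n^z$ with $(y + \ii X^\vee) \cap n\Delta$ under the above bijection. A point $u \in n\Delta$ with barycentric coordinates $(p_\beta)$ lies in $y + \ii X^\vee$ iff $d\lambda(u) - \ii d\lambda(\zeta) \in \ii \Z$ for every $\lambda \in X$ (using $d\lambda(y) = \ii d\lambda(\zeta)$). Tested on simple roots $\alpha \in \Pi \subset Q$, for which $d\alpha(\zeta) \in \Z$ since $\alpha(z) = 1$, this forces $p_\alpha \in \Z$ and hence $\pp \in \sK_n$ via \eqref{e:vk-p-0}; tested on a representative $\lambda = \sum c_\alpha \alpha$ of a generator of $X/Q$, it yields exactly \eqref{eq:cond-z}; and for arbitrary $\lambda \in X$ the condition is a $\Z$-combination of these and contributes no new constraint. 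Therefore $(y + \ii X^\vee) \cap n\Delta \leftrightarrow \sK_n^z$, and this set is $X^\vee/Q^\vee$-stable since the action on $n\Delta$ preserves $y + \ii X^\vee$ (the translation part is by $\ii n\omega_j^\vee \in \ii X^\vee$, and the linear part lies in $W$). Combining the three steps produces $\varphi_*$. The main technical point will be this last identification: beyond the integrality computation itself, one must check that \eqref{eq:cond-z} is well-defined independently of $\zeta$ (the class of $d\lambda(\zeta)$ in $\Q/\Z$ depends only on $z$, as already observed in the text) and of the lift $\lambda = \sum c_\alpha \alpha$ (a change by an element of $Q$ shifts both sides of \eqref{eq:cond-z} by the same integer, using $p_\alpha \in \Z$ and $d\mu(\zeta) \in \Z$ for $\mu \in Q$).
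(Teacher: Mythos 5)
Your proposal is correct and follows essentially the same route as the paper's proof: lift to $\ttt$ via $e$, use that $\Delta$ (equivalently $n\Delta$) is a fundamental domain for the affine Weyl group with residual $X^\vee/Q^\vee$-action permuting barycentric coordinates, and characterize membership in $T_n^z$ by testing the congruence on simple roots (forcing $p_\alpha\in\Z$) and on generators of $X/Q$ (giving \eqref{eq:cond-z}). The only difference is presentational --- you work directly in the fiber $y+\ii X^\vee$ over $z$ and its quotient by $\ii n X^\vee\rtimes W$, whereas the paper first establishes the bijection $\sK_{n,\R}/(X^\vee/Q^\vee)\isoto T/W$ and then restricts it; the underlying computations coincide.
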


\begin{proof}
Consider a $W$-orbit $[a]$ in $T/W$, where $a\in T$.
Write
$a=e(x)$ for some $x\in \ttt$.
The map $e\colon \ttt\to T$ is $W$-equivariant.
The group $X^\vee$ acts on the set $\ttt$ by translations, and the map $e$ induces a bijection $\ttt/X^\vee\isoto T$,
hence it induces a bijection
\[ \ttt/(X^\vee\rtimes W)\isoto T/W.\]
Since $\Delta$ is a fundamental domain of the normal subgroup $Q^\vee\rtimes W\subset X^\vee\rtimes W$  (see Section \ref{s:action}),
after changing the representative $a\in T$ of $[a]\in T/W$ we may choose $x$  lying in $\Delta$,
and such $x$ is unique up to the action of the quotient group $(X^\vee\rtimes W)\hs/\hs(Q^\vee\rtimes W) = X^\vee/Q^\vee$.
We see that the map $e$ induces a bijection
\[ \Delta/(X^\vee/Q^\vee)\isoto T/W.\]
The map
\begin{equation}\label{e:pp-xx-x}
 \sK_{n,\R}\to\Delta, \quad \pp\mapsto \ \xx=\pp/n \ \mapsto x
\end{equation}
is a $P^\vee/Q^\vee$-equivariant bijection,   hence it induces a  bijection
\[   \sK_{n,\R}/(X^\vee/Q^\vee) \isoto\Delta/ (X^\vee/Q^\vee).\]
We see that the map $\varphi \colon \sK_{n,\R}\to T$ induces a bijection
\begin{equation}  \label{e:bijection-over-R}
\sK_{n,\R}/(X^\vee/Q^\vee)\isoto  T/W.
\end{equation}
In particular, two tuples $\pp,\pp'\in\sK_{n,\R}$ are in the same  $X^\vee/Q^\vee$-orbit if and only if $\varphi(\pp),\varphi(\pp')\in T$ are in the same $W$-orbit.

Now we wish to describe  $\pp=(p_\beta)\in\sK_{n,\R}$ such that $\varphi(\pp)\in T_n^z$, i.e., $\varphi(\pp)^n=z$.
For $x\in\Delta$ obtained from $\pp\in\sK_{n,\R}$ as in \eqref{e:pp-xx-x}, the assertion that  $e(x)^n=z$ is equivalent to the condition
$$
\lambda(\exp 2\pi n x)=\lambda(\exp 2\pi\ii\hs \zeta)
$$
for all $\lambda\in X$,
which in turn is equivalent to
$$
-\ii n\hs d\lambda(x)\equiv d\lambda(\zeta)\pmod{\Z}.
$$
We write $\lambda=\sum_{{\alpha}\in\Pi}c_{\alpha}\alpha$ and obtain
$$
-\ii n\sum_{{\alpha}\in\Pi}c_{\alpha}\, d\alpha(x)\equiv
d\lambda(\zeta) \pmod{\Z}.
$$
Since $d\alpha(x)=\ii x_{\alpha}$ for $\alpha\in\Pi$, and $nx_\alpha=p_\alpha$, we obtain
\[
\sum_{\alpha\in \Pi}c_\alpha p_\alpha=n\sum_{{\alpha}\in\Pi}c_{\alpha} x_{\alpha}\equiv d\lambda(\zeta) \pmod{\Z}.
\]
Thus $\varphi(\pp)\in T_n^z$ if and only if
\begin{equation}\label{eq:cond-gen}
\sum_{\alpha\in \Pi}c_\alpha p_\alpha\equiv d\lambda(\zeta) \pmod{\Z} \text{ \  for any \ }\lambda\in X
\text{ with \ }\lambda=\sum_{\alpha\in \Pi} c_\alpha\alpha.
\end{equation}

Assume that $\varphi(\pp)\in T_n^z$, then \eqref{eq:cond-gen} holds.
Observe that for $\lambda=\alpha\in\Pi$, condition \eqref{eq:cond-gen} means that $p_\alpha\in\Z$, because $d\alpha(\zeta)\in\Z$.
Since $p_\alpha\in\Z$ for all $\alpha\in\Pi$,  by \eqref{e:vk-p-0} we have $p_\beta\in\Z$  for any $\beta\in\Pitil_0=\Pitil\smallsetminus\Pi$,
because $m_{\beta}=1$. Thus $\pp\in\sK_n$\hs.
Condition \eqref{eq:cond-z} is a special case of \eqref{eq:cond-gen}.
We conclude that $\pp\in \sK_n^z$\hs.

Conversely, assume that  $\pp\in \sK_n^z\subset \sK_n$\hs, then
condition \eqref{eq:cond-gen} holds for $\lambda=\alpha$ for any $\alpha\in\Pi$.
Since condition \eqref{eq:cond-gen} is additive in $\lambda$
(i.e., it holds for any integer linear combination of two weights $\lambda,\lambda'\in P$ whenever it holds for $\lambda$ and $\lambda'$),
it holds for any $\lambda\in Q$, because $\Pi$ generates $Q$ as an abelian group.
Now condition \eqref{eq:cond-z} implies that \eqref{eq:cond-gen} holds for for all $\lambda\in X$.
We conclude that $\varphi(\pp)\in T_n^z$\hs.

Thus $\varphi(\pp)\in T_n^z$ if and only if $\pp\in \sK_n^z$\hs.
Since the subset $T_n^z\subset T$ is $W$-invariant, we conclude that the subset $\sK_n^z\subset K_{n,\R}$ is $X^\vee/Q^\vee$-invariant.
Bijection \eqref{e:bijection-over-R} induces  \eqref{e:varphi-ast}, which proves the theorem.
\end{proof}

We need another version of Theorem \ref{thm:p-z}.
We start from a Kac $n$-labeling $\qq=(q_\beta)\in \sK_n$ of $\Dtil$.
Set $z=\varphi(\qq)^n$.
It follows from the proof of Theorem \ref{thm:p-z} that $z\in Z_G$.

\begin{corollary}\label{cor:p-q}
With the assumptions and notation of Theorem \ref{thm:p-z}, let $\qq$ be an $n$-labeling of $\Dtil$.
Set $z=\varphi(\qq)^n\in Z_G$.
Then the subset $\sK_n^{(\qq)}\subset\sK_n$ consisting of  Kac $n$-labelings $\pp\in\sK_n$ of $\Dtil$ satisfying
\begin{align}\label{eq:cond-q}
&\sum_{\alpha\in\Pi}c_\alpha p_\alpha\equiv \sum_{\alpha\in\Pi}c_\alpha q_\alpha\pmod{\Z} \\
&\text{for any generator } [\lambda] \text{ of } X/Q   \text{ with \ }
\lambda=\sum_{\alpha\in\Pi}c_\alpha \alpha\,,\nonumber
\end{align}
is $X^\vee/Q^\vee$-invariant, and the map $\varphi$ of Construction \ref{constr:varphi}
induces a bijection between  $\sK_n^{(\qq)}/(X^\vee/Q^\vee)$ and  $T_n^z/W$.
\end{corollary}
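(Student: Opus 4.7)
The plan is to reduce Corollary \ref{cor:p-q} directly to Theorem \ref{thm:p-z} applied to the element $z := \varphi(\qq)^n$. Once I verify that the two subsets $\sK_n^{(\qq)}$ and $\sK_n^z$ of $\sK_n$ actually coincide, both assertions (the $X^\vee/Q^\vee$-invariance and the bijection onto $T_n^z/W$) follow immediately from Theorem \ref{thm:p-z}, so no independent argument is needed.

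First I would confirm that $z \in Z_G$, which is asserted but not proved in the statement. Let $x \in \Delta$ be the point with barycentric coordinates $(q_\beta/n)_{\beta \in \Pitil}$, so that $\varphi(\qq) = \exp 2\pi x$ and hence $\varphi(\qq)^n = \exp 2\pi n x$. For any simple root $\alpha \in \Pi$, the definition of barycentric coordinates gives
$$\alpha(z) = \exp\bigl(2\pi n \cdot d\alpha(x)\bigr) = \exp(2\pi\ii\hs q_\alpha) = 1,$$
because $q_\alpha \in \Z$. Since $\Pi$ generates the root lattice $Q$, every root is trivial on $z$, which places $z$ in $Z_G$.

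Next, I would exhibit an explicit lift $\zeta \in \ttt_\C$ of $z$ and compute condition \eqref{eq:cond-z} in terms of $\qq$. Take $\zeta := -\ii\hs n x$; then $\exp 2\pi\ii\zeta = \exp 2\pi n x = z$, as required. For $\lambda = \sum_{\alpha \in \Pi} c_\alpha \alpha \in X$, using $d\alpha(x) = \ii x_\alpha$ and $n x_\alpha = q_\alpha$, a short calculation yields
$$d\lambda(\zeta) \,=\, -\ii\hs n \sum_{\alpha \in \Pi} c_\alpha \, d\alpha(x) \,=\, -\ii\hs n \cdot \ii \sum_{\alpha \in \Pi} c_\alpha x_\alpha \,=\, \sum_{\alpha \in \Pi} c_\alpha q_\alpha.$$
Substituting this identity into \eqref{eq:cond-z} produces exactly \eqref{eq:cond-q}. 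Hence $\sK_n^{(\qq)} = \sK_n^z$ as subsets of $\sK_n$, and the corollary reduces to Theorem \ref{thm:p-z}.

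I do not anticipate any substantial obstacle; the proof is a bookkeeping step that translates the intrinsic description of $z$ in \eqref{eq:cond-z} (via an arbitrary lift $\zeta$) into the extrinsic description \eqref{eq:cond-q} (via the specific labeling $\qq$ producing $z$). The only minor subtlety is that $d\lambda(\zeta)$ is well-defined merely modulo $\Z$ and depends on the choice of $\zeta$, but since both \eqref{eq:cond-z} and \eqref{eq:cond-q} are congruences modulo $\Z$, this ambiguity is harmless.
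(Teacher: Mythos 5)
Your proof is correct and takes essentially the same route as the paper: both reduce the corollary to Theorem \ref{thm:p-z} by identifying $\sK_n^{(\qq)}$ with $\sK_n^z$. The paper gets this identification in one line by noting that $\qq\in\sK_n^z$ (so that $d\lambda(\zeta)\equiv\sum_\alpha c_\alpha q_\alpha\pmod{\Z}$), whereas you unfold the same fact by an explicit choice of lift $\zeta=-\ii\hs n x$; the content is identical.
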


Indeed, by Theorem \ref{thm:p-z} we have $\qq\in \sK_n^z$, hence $\sK_n^{(\qq)}=\sK_n^z$, and the corollary follows from the theorem.

\section{Real Galois cohomology}\label{s:cohomology}

We denote by $H^1(\R,\HH)$  the first (nonabelian) Galois cohomology set of an $\R$-group $\HH$.
By definition, $H^1(\R,\HH)=Z^1(\R,\HH)/\!\!\sim$,
where $Z^1(\R,\HH)={\{c\in \HH(\C)\ |\ c\bar{c}=1\}}$, and $c\sim c'$ if there exists $h\in \HH(\C)$ such that $c'=h^{-1}c\hs\overline{h}$.
We say that $c\in Z^1(\R,\HH)$ is a {\em cocycle.}

Let $\HH(\R)_2\subset\HH(\R)$ denote the subset of elements of order dividing 2. If $b\in \HH(\R)_2$,  then
\[  b\overline{b} =b^2=1,\]
hence $b$ is a cocycle. Thus $\HH(\R)_2\subset Z^1(\R,\HH)$.

Let $\GG$ be a connected, compact (anisotropic), semisimple algebraic group over the field of real numbers $\R$.
Let $\TT\subset\GG$ be a maximal torus.
We use the notation of Section \ref{s:notation}.

\begin{theorem} \label{thm:Kac}
Let $\GG$ be a connected, compact, semisimple algebraic $\R$-group.
There is a canonical bijection between  the set of $P^\vee/Q^\vee$-orbits
in the set $\sK_2$ of Kac $2$-labelings of the extended Dynkin diagram $\Dtil=\Dtil(\GG,\TT,\Pi)$
and the first Galois cohomology set $H^1(\R,\GG^\ad)$.
\end{theorem}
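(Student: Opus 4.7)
The plan is to combine Theorem \ref{thm:p-z} with the classical description of the first Galois cohomology of a compact real group in terms of conjugacy classes of involutions.

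First, recall from Serre \cite[Section III.4.5, Theorem 6]{Serre} that for any compact real algebraic group $\HH$ the set $H^1(\R,\HH)$ is in canonical bijection with the set of $\HH(\R)$-conjugacy classes of elements $s\in\HH(\R)$ with $s^2=1$. Applying this to the compact adjoint group $\GG^\ad$, I obtain a canonical bijection
\[
H^1(\R,\GG^\ad)\isoto G^\ad_2/\!\!\sim,
\]
where $G^\ad_2\subset G^\ad=\GG^\ad(\R)$ is the set of elements of order dividing $2$, and $\sim$ denotes conjugation in $G^\ad$.

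Next, I would identify $G^\ad_2/\!\!\sim$ with $T^\ad_2/W$. Indeed, since $G^\ad$ is compact and connected, every element is conjugate into the maximal torus $T^\ad=\TT^\ad(\R)$, and two elements of $T^\ad$ are $G^\ad$-conjugate if and only if they are $W$-conjugate (where $W$ acts as in \eqref{e:ast}). Hence
\[
G^\ad_2/\!\!\sim\,\,\isoto T^\ad_2/W.
\]

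Finally, I would apply Theorem \ref{thm:p-z} to the group $\GG^\ad$ with $n=2$ and $z=1$. For the adjoint group, the character and cocharacter groups of the maximal torus satisfy
\[
X(\TT^\ad_\C)=Q,\qquad X^\vee(\TT^\ad_\C)=P^\vee,
\]
so in the notation of Theorem \ref{thm:p-z} we have $X^\vee/Q^\vee = P^\vee/Q^\vee$ and the quotient $X/Q=0$ is trivial. Consequently, condition \eqref{eq:cond-z} is vacuous (there are no nontrivial generators $[\lambda]\in X/Q$), and so $\sK_2^1=\sK_2$. Theorem \ref{thm:p-z} therefore yields a bijection
\[
\sK_2\big/(P^\vee/Q^\vee)\,\isoto\, T^\ad_2\big/W.
\]
Composing the three bijections above gives the desired canonical bijection
\[
\sK_2\big/(P^\vee/Q^\vee)\,\isoto\, H^1(\R,\GG^\ad).
\]

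The main point requiring care is the verification that for $\GG^\ad$ the condition \eqref{eq:cond-z} becomes empty and that the acting group $X^\vee/Q^\vee$ is the full group $P^\vee/Q^\vee$; this is precisely the feature of the adjoint group that makes the combinatorial description as clean as possible. The remaining ingredients (Serre's theorem and the Weyl-group conjugacy argument for compact connected groups) are standard.
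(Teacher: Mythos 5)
Your proposal is correct and follows essentially the same route as the paper: apply Theorem \ref{thm:p-z} to $\GG^\ad$ with $n=2$, $z=1$ (where $X=Q$ and $X^\vee=P^\vee$, so condition \eqref{eq:cond-z} is vacuous and $\sK_2^1=\sK_2$), and combine with Serre's description of $H^1$ of a compact group. The only cosmetic difference is that the paper invokes \cite[Section III.4.5, Example (a)]{Serre} to get the bijection $(T^\ad)_2/W\isoto H^1(\R,\GG^\ad)$ in one step, whereas you factor it through conjugacy classes of involutions in $G^\ad$ and the standard maximal-torus conjugacy argument.
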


We specify the bijection.
Consider the map $\varphi^\ad\colon \sK_{2,\R}\to T^\ad$ of Construction \ref{constr:varphi} for $\GG^\ad$,
it sends $\sK_2\subset \sK_{2,\R}$ to $(T^\ad)_2$, where $(T^\ad)_2$ denotes the set of elements of order dividing 2 in $T^\ad$.
The bijection of the theorem sends the $P^\vee/Q^\vee$-orbit of $\pp\in \sK_2$ to the cohomology class $[\varphi(\pp)]\in H^1(\R,\GG^\ad)$
of $\varphi(\pp)\in (T^\ad)_2\subset Z^1(\R,\GG^\ad)$.

This result goes back to Kac \cite{Kac}.
In the last sentence of \cite{Kac} Kac notes that his results yield a classification of real forms of simple Lie algebras.
Inner real forms of a compact simple group $\GG$ (or of its Lie algebra $\Lie\,\GG$)
are classified by the orbits of the group $\Aut\,\Dtil=(P^\vee/Q^\vee)\rtimes \Aut\, D$
in the set $\sK_2$ of Kac 2-labelings  of $\Dtil$.
Those orbits and the corresponding real forms  are listed in \cite[Table 7, Types I and II]{OV}.

\begin{proof}
By Theorem \ref{thm:p-z} for the adjoint group $\GG^\ad$, the map $\varphi^\ad$ induces a bijection $\sK_2/(P^\vee/Q^\vee)\isoto (T^\ad)_2/W$.
By \cite[Section III.4.5, Example (a)]{Serre} the map sending an element
$t^\ad\in (T^\ad)_2\subset Z^1(\R,\GG^\ad)$ to its cohomology class  $[t^\ad]\in H^1(\R, \GG^\ad)$
induces a bijection $(T^\ad)_2/W\isoto H^1(\R,\GG^\ad)$, and the theorem follows.
\end{proof}

Let $_c\GG$ be an inner twisted form of a compact semisimple $\R$-group $\GG$,
where $c\in Z^1(\R, \GG^\ad)$.
By Theorem \ref{thm:Kac} the cocycle $c$ is equivalent to a cocycle of the form $t^\ad=\varphi^\ad(\qq)\in (T^\ad)_2\subset Z^1(\R, \GG^\ad)$
for some Kac 2-labeling $\qq=(q_\beta)_{\beta\in\Pitil}$ of $\Dtil$.
We have $t^\ad=\exp 2\pi y$, where $y\in\Delta$ has barycentric coordinates $y_\beta=q_\beta/2$ for $\beta\in\Pitil$.
It follows that $t^\ad$ is determined by the equations
\[
\alpha(t^\ad)=(-1)^{q_\alpha}\quad \text{for } \alpha\in\Pi.
\]
We can twist $\GG$ using $t^\ad$; we denote the obtained twisted form by $_\qq \GG$, then $_c \GG\simeq\hs_\qq \GG$.
Note that there is a canonical isomorphism between $\TT$ and the twisted torus $_\qq \TT$,
because the inner automorphism of $\GG$ defined by $t^\ad$ acts on $\TT$ trivially.
It follows that $\TT$ canonically embeds into $_\qq \GG$, in particular, $T_2\subset \hs_\qq\GG(\R)_2\subset Z^1(\R,\hs_\qq \GG)$.

We compute $H^1(\R,\hs_\qq\GG)$.
Set $t=\varphi(\qq)\in T$, where  $\varphi\colon\sK_{2,\R}\to T$ is the map of Construction \ref{constr:varphi}.
Then the image of $t$ in $T^\ad$ is $t^\ad$.
Since $(t^\ad)^2=1$, we see that $t^2\in Z_G$.
Set $z=t^2$, then $t\in T_2^z$.

\begin{lemma}\label{prop:H-T2z}
There is a bijection $T_2^z/W\isoto H^1(\R,\hs _\qq \GG)$ that sends the $W$-orbit of $a\in T_2^z$
to the cohomology class of $at^{-1}\in T_2\subset Z^1(\R,\hs_\qq\GG)$.
\end{lemma}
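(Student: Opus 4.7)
The plan is to show that the assignment $a \mapsto at^{-1}$ defines a well-defined map $\Phi\colon T_2^z/W \to H^1(\R, \hs_\qq\GG)$, and then to derive bijectivity from the relevant result of \cite{Bo}.

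First, I would fix the explicit lift $\tilde{t} = t \in T$ of $t^\ad$, available by construction, so that the twisted Galois action on $\hs_\qq\GG(\C) = \GG(\C)$ takes the form $\sigma'(g) = t \sigma(g) t^{-1}$. Since $T$ is abelian and pointwise $\sigma$-fixed, $\sigma'$ restricts to the identity on $T$, which recovers the canonical embedding $\TT \hookrightarrow \hs_\qq\TT$ from the excerpt and places $T_2$ inside $\hs_\qq\GG(\R)_2 \subset Z^1(\R, \hs_\qq\GG)$. The computation $(at^{-1})^2 = a^2 t^{-2} = z z^{-1} = 1$ then shows that $at^{-1} \in T_2$ for every $a \in T_2^z$.

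Next, I would verify $W$-invariance of the resulting cohomology class. Given $a' = nan^{-1}$ with $n \in N$, I would take $h = n^{-1} \in \GG(\C)$ as the conjugating cochain. Using $\sigma(n) = n$ (so that $\sigma'(h) = tn^{-1}t^{-1}$), the direct computation
\[
h^{-1}(at^{-1})\sigma'(h) = n(at^{-1})(tn^{-1}t^{-1}) = na(t^{-1}t)n^{-1}t^{-1} = nan^{-1}t^{-1} = a't^{-1},
\]
which uses only the abelianness of $T$, gives $[at^{-1}] = [a't^{-1}]$ in $H^1(\R, \hs_\qq\GG)$. Hence $\Phi$ is well defined.

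Finally, for bijectivity I would invoke the result of \cite{Bo}, whose content in the present situation amounts precisely to the bijection $\Phi$. Its essential thrust is that $\TT$ remains a fundamental (anisotropic, hence compact) maximal $\R$-torus in the inner twist $\hs_\qq\GG$, so $H^1(\R, \hs_\qq\GG)$ reduces to $T_2$ modulo an appropriate real Weyl-group action; under the translation $a \mapsto at^{-1}$, this matches $T_2^z/W$ exactly. The main obstacle is this last step: identifying the real Weyl group acting on the $Z^1$-representatives in $T_2$ of $\hs_\qq\GG$ with the ambient compact Weyl group $W$ acting by conjugation on $T_2^z$ via the shift $a \leftrightarrow at^{-1}$; this identification is the substance we borrow from \cite{Bo}.
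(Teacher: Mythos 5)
Your proposal is correct and follows essentially the same route as the paper: your well-definedness computation $h^{-1}(at^{-1})\sigma'(h)=a't^{-1}$ with $h=n^{-1}$ is exactly the paper's verification that $a\mapsto at^{-1}$ intertwines the standard $W$-action on $T_2^z$ with the $t^\ad$-twisted $W$-action $w\ast_{t^\ad}b=nbtn^{-1}t^{-1}$ on $T_2$, and the remaining input (that $b\mapsto[b]$ induces a bijection $T_2/\ast_{t^\ad}W\isoto H^1(\R,\hs_\qq\GG)$) is the same citation of \cite{Bo} that the paper uses.
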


\begin{proof}
Recall that we have the standard left action $\ast$ of $W$ on $T_2^z$ given by formula \eqref{e:ast}.
We define the $t^\ad$-twisted left action $\ast_{t^\ad}$ of $W$ on $T_2$ as follows:
let  $w=nT\in W$, $n\in N$, $b\in T_2$, then
\[ w\ast_{t^\ad}\hs b= n b\hs t n^{-1} t^{-1}.\]

We define a bijection
\begin{equation}\label{e:twisted-standard}
 a\mapsto at^{-1}\colon T_2^z\to T_2
\end{equation}
(which takes $t$ to $1$).
We have
\[ (w\ast a)\hs t^{-1}=n a  n^{-1} t^{-1}=n (at^{-1})\hs t n^{-1} t^{-1}=w\ast_{t^\ad}(at^{-1}), \]
hence, the standard left action $\ast$ of $W$ on $T_2^z$ is compatible with the  $t^\ad$-twisted left action $\ast_{t^\ad}$ of $W$ on $T_2$
with respect to bijection \eqref{e:twisted-standard}.
We obtain a bijection $T_2^z/W=T_2^z/\ast W\isoto T_2/\ast_{t^\ad}W$ between the sets of $W$-orbits.

By \cite[Theorem 1]{Bo}, see also \cite[Theorem 9]{Borovoi-arXiv},
the map sending $b\in T_2\subset Z^1(\R,\hs_\qq\GG)$ to its cohomology class $[b]\in H^1(\R,\hs_\qq\GG)$
induces a bijection $T_2/\ast_{t^\ad}W\isoto H^1(\R,\hs_\qq\GG)$.

Combining these two bijections, we obtain the bijection of the lemma.
\end{proof}

The following theorem is the main result of this paper.
It gives a combinatorial description of the first Galois cohomology set $H^1(\R,\hs_\qq \GG)$ of an inner twisted form $_\qq \GG$
of a compact semisimple $\R$-group $\GG$ in terms of Kac 2-labelings of the extended Dynkin diagram of $\GG$.

\begin{theorem}\label{t:main}
Let $\GG$ be a connected, compact, semisimple algebraic $\R$-group.
Let $\TT\subset \GG$ be a maximal torus and $\Pi$ be a basis of the root system $R=R(\GG_\C,\TT_\C)$.
Let $\qq$ be a Kac 2-labeling of the extended Dynkin diagram $\Dtil=\Dtil(\GG,\TT,\Pi)$.
Then the subset $\sK_2^{(\qq)}\subset \sK_2$ of Kac 2-labelings $\pp$ of $\Dtil$
satisfying  condition \eqref{eq:cond-q} of Corollary \ref{cor:p-q}
is $X^\vee/Q^\vee$-invariant, and there is a bijection  between the set of orbits $\sK_2^{(\qq)}/(X^\vee/Q^\vee)$
and  the first Galois cohomology  set $H^1(\R,\hs_\qq \GG)$.
\end{theorem}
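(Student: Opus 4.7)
The plan is to assemble the theorem from Corollary~\ref{cor:p-q} (applied with $n=2$) together with Lemma~\ref{prop:H-T2z}; the first supplies the combinatorics of Kac $2$-labelings, the second effects the passage to Galois cohomology.

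First I would adopt the setup of the paragraph preceding Lemma~\ref{prop:H-T2z}: set $t=\varphi(\qq)\in T$ and $z=t^2$. Since the image $t^\ad$ of $t$ in $T^\ad$ satisfies $(t^\ad)^2=1$, we have $z\in Z_G$, and hence $t\in T_2^z$. Consequently $\qq$ is a Kac $2$-labeling with $\varphi(\qq)^2=z$, which is precisely the hypothesis of Corollary~\ref{cor:p-q} with $n=2$. Invoking that corollary yields at once the $X^\vee/Q^\vee$-invariance of $\sK_2^{(\qq)}$ (the first assertion of the theorem) together with a bijection
\[
\sK_2^{(\qq)}/(X^\vee/Q^\vee)\isoto T_2^z/W
\]
induced by the map $\varphi$ of Construction~\ref{constr:varphi}.

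Next, applying Lemma~\ref{prop:H-T2z} to this same $z$ supplies a bijection
\[
T_2^z/W\isoto H^1(\R,\hs_\qq\GG)
\]
sending the $W$-orbit of $a\in T_2^z$ to the class of $at^{-1}$. Composing the two bijections yields the desired bijection $\sK_2^{(\qq)}/(X^\vee/Q^\vee)\isoto H^1(\R,\hs_\qq\GG)$, explicitly sending the orbit of $\pp\in\sK_2^{(\qq)}$ to the class of $\varphi(\pp)\hs t^{-1}$ in $H^1(\R,\hs_\qq\GG)$.

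The genuine content has already been discharged by the two cited results, so there is no real obstacle here; the proof is essentially an assembly. The one point requiring attention is bookkeeping: one must check that the central element $z$ in Corollary~\ref{cor:p-q} attached to the labeling $\qq$ coincides with the $z=t^2$ used in Lemma~\ref{prop:H-T2z}. This is immediate from $\varphi(\qq)=t$, so the two halves of the argument do fit together verbatim.
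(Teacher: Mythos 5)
Your proposal is correct and follows essentially the same route as the paper: compose the bijection $\sK_2^{(\qq)}/(X^\vee/Q^\vee)\isoto T_2^z/W$ from Corollary~\ref{cor:p-q} (which also gives the invariance claim) with the bijection $T_2^z/W\isoto H^1(\R,\hs_\qq\GG)$ of Lemma~\ref{prop:H-T2z}, after checking that the central element $z=\varphi(\qq)^2=t^2$ is the same in both. Your explicit description of the composite map, sending the orbit of $\pp$ to the class of $\varphi(\pp)\hs t^{-1}$, agrees with the paper's $\exp 2\pi u$ with $u_\alpha=(p_\alpha-q_\alpha)/2$.
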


We specify the bijection of the theorem. It is induced by the map sending
a  Kac 2-labeling $\pp\in\sK_2$  satisfying \eqref{eq:cond-q}
to the cocycle  $\exp 2\pi u\in T_2\subset Z^1(\R,\hs_\qq \GG)$,
where $u\in\ttt$ is the the element with barycentric coordinates $u_\alpha=(p_\alpha-q_\alpha)/2$ for $\alpha\in\Pi$.
In particular, this bijection sends the $X^\vee/Q^\vee$-orbit of $\qq$
to the neutral element of $H^1(\R,\hs_\qq \GG)$.

\begin{proof}[Proof of Theorem \ref{t:main}]
By Corollary \ref{cor:p-q} there is a bijection between
the set of orbits of $X^\vee/Q^\vee$ in the set of Kac 2-labelings $\pp\in\sK_2$ of $\Dtil$ satisfying \eqref{eq:cond-q} and the set $T_2^z/W$,
which sends the $X^\vee/Q^\vee$-orbit of $\pp$
to the $W$-orbit of $\exp 2\pi x\in T_2^z$,
where $x\in\ttt$ is the element with barycentric coordinates $x_\beta=p_\beta/2$ for $\beta\in\Pitil$.
By Lemma \ref{prop:H-T2z} there is a bijection $ T_2^z/W\isoto H^1(\R,\hs _\qq \GG)$,
which sends the $W$-orbit of an element $a\in T_2^z$ to the cohomology class of $at^{-1}\in T_2\subset Z^1(\R,\hs _\qq \GG)$.
We compose these two bijections.
Since $t=\exp 2\pi y$, where $y\in\ttt$ is the element with barycentric coordinates $y_\beta=q_\beta/2$ for $\beta\in\Pitil$,
the composite  bijection sends the $X^\vee/Q^\vee$-orbit of a Kac 2-labeling $\pp$ satisfying \eqref{eq:cond-q}
to the cohomology class of
\[ \exp 2\pi x\cdot (\exp 2\pi y)^{-1}=\exp 2\pi (x-y)=\exp 2\pi u \in T_2\subset Z^1(\R,\hs_\qq \GG),\]
where $u:=x-y\in\ttt$ has barycentric coordinates $u_\alpha=(p_\alpha-q_\alpha)/2$ for $\alpha\in\Pi$.
Clearly this  composite  bijection  sends $\pp=\qq$ to the cohomology class of $1\in Z^1(\R,\hs_\qq \GG)$,
thus to the neutral element of $H^1(\R,\hs_\qq \GG)$ .
\end{proof}

\section{Example: forms of $\EE_7$}
\label{s:E7}

Let $\GG$ be the simply connected compact group $\GG$ of type $\EE_7$.
Since $\GG$ is simply connected, we have $X=P$.

Below in the left hand side we give the extended Dynkin diagram $\Dtil$ of $\GG_\C$
with the numbering of vertices of \cite[Table 1]{OV},
and in the right hand side we give $\Dtil$  with the coefficients $m_j$ from \cite[Table 6]{OV}, see \eqref{eq:sum-mj}.
We have $X/Q=P/Q\simeq \Z/2\Z$, and there is $\lambda\in X\smallsetminus Q$
with
\begin{equation}\label{e:lambda-E7}
\lambda=\half(\alpha_1+\alpha_3+\alpha_7),
\end{equation}
 see e.g.~\cite[Table 3]{OV}.
In the left-hand side diagram below we mark in black the roots appearing
(with  non-integer  half-integer coefficients) in formula \eqref{e:lambda-E7}:
\begin{equation*}
\sxymatrix{  \bcb{1} \rline &  \bc{2} \rline &  \bcb{3} \rline &  \bc{4}
\rline \dline &  \bc{5} \rline &  \bc{6}\rline &\bc{0} \\
& & & \bcbu{7} & & }
\qquad\qquad\qquad
\sxymatrix{  \bc{1} \rline &  \bc{2} \rline &  \bc{3} \rline &  \bc{4}
\rline \dline &  \bc{3} \rline &  \bc{2} \rline &  \bc{1} \\
& & & \bcu{2} & & }
\end{equation*}
The Kac 2-labelings of $\Dtil$ are:
\begin{align*}
&\qq^{(1)}=000\two{0}{0} 002\qquad \qq^{(2)}=200\two{0}{0} 000\\
&\qq^{(3)}=100\two{0}{0}001\\
&\qq^{(4)}=010\two{0}{0} 000\qquad \qq^{(5)}=000\two{0}{0} 010\\
&\qq^{(6)}=000\two{0}{1} 000\,.
\end{align*}

The real forms of $\EE_7$ correspond to elements of $H^1(\R, \GG^\ad)$, and by Theorem \ref{thm:Kac}
to the orbits of $P^\vee/Q^\vee$ in the set  $\sK_2$ of Kac 2-labelings of $\Dtil$.
These orbits  are:
$$\{\qq^{(1)}, \qq^{(2)}\},\quad \{\qq^{(3)}\},\quad \{\qq^{(4)}, \qq^{(5)}\},\quad \{\qq^{(6)}\},$$
hence $\# H^1(\R,\GG^\ad)=4$.

Concerning $H^1(\R,\hs_\qq\GG)$, condition \eqref{eq:cond-q} defining $\sK_2^{(\qq)}$ reads
\begin{equation*}
\half (p_1+ p_3+ p_7)\equiv \half( q_1+ q_3+q_7) \pmod{\Z},
\end{equation*}
which is equivalent to
\begin{equation*}
p_1+p_3+p_7\equiv q_1+q_3+q_7 \pmod{2}.
\end{equation*}
We say that a 2-labeling $\pp\in\sK_2$ is {\em even} (resp., {\em odd}) if the sum over the black vertices
\[ p_1+p_3+p_7\]
is even (resp., odd).
Then $\sK_2^{(\qq)}$ is the set of labelings $\pp\in \sK_2$ of the same parity as $\qq$.
Since $\GG$ is simply connected, we have $X^\vee=Q^\vee$, and by Theorem \ref{t:main} the first Galois cohomology set $H^1(\R,\,_\qq \GG)$
is in a bijection with the set $\sK_2^{(\qq)}$.

For $_\qq \GG=\EE_7$ (the compact form) we take $\qq=\qq^{(1)}$,
then $q_1+q_3+q_7=0$, hence $\qq$ is even.
For $_\qq \GG={\bf EVI}$ we take $\qq=\qq^{(4)}$, see \cite[Table 7]{OV}.
We have $q_1+q_3+q_7=0$, so again $\qq$ is even.
We see that in both cases the set  $\sK_2^{(\qq)}$ is the set of all {\em even} 2-labelings of $\Dtil$:
\begin{equation}\label{e:E7-even}
000\two{0}{0} 002\qquad 200\two{0}{0} 000\qquad
010\two{0}{0} 000\qquad 000\two{0}{0} 010\,.
\end{equation}
The set $H^1(\R,\,_\qq \GG)$ is in a bijection with the set \eqref{e:E7-even}.
In particular, $\#H^1(\R,\,_\qq \GG)=4$ in both the compact case and ${\bf EVI}$.

For $_\qq \GG={\bf EV}$ (the split form) we take $\qq=\qq^{(6)}$, see \cite[Table 7]{OV}.
We have  $q_1+q_3+q_7=1$, hence $\qq$ is odd.
For $_\qq \GG={\bf EVII}$ (the Hermitian form) we take $\qq=\qq^{(3)}$, see \cite[Table 7]{OV}.
Again we have  $q_1+q_3+q_7=1$, and again $\qq$ is odd.
In both cases the set  $\sK_2^{(\qq)}$ is the set of all {\em odd} 2-labelings of $\Dtil$:
\begin{equation}\label{e:E7-odd}
 100\two{0}{0}001\qquad    000\two{0}{1} 000\,.
 \end{equation}
The set $H^1(\R,\,_\qq \GG)$ is in a bijection with the set  \eqref{e:E7-odd}.
In particular,  $\#H^1(\R,\,_\qq \GG)=2$ in both cases ${\bf EV}$ and ${\bf EVII}$.

In each case the element $\qq\in\sK_2^{(q)}$ corresponds to the neutral element of $H^1(\R,\hs_\qq \GG)$.

\section{Example: half-spin groups}
\label{s:half-spin}

Let $\GG$ be the compact group of type $\DD_\ell$ with even $\ell=2k\ge 4$ with the cocharacter lattice
\[ X^\vee =\langle Q^\vee,\omega_{\ell-1}^\vee\rangle. \]
This compact group is neither simply connected nor adjoint, and it is  isomorphic to $\SO_{2\ell}$ only if  $\ell=4$.
It is called a half-spin group.

We show that the character lattice $X$  is generated by $Q$ and the weight
\begin{equation}\label{e:X-alphas}
 \lambda:=(\alpha_1+\alpha_3+\dots+\alpha_{\ell-3}+\alpha_\ell)/2.
\end{equation}
Indeed,
$\lambda$ is orthogonal to $\omega_{\ell-1}^\vee$ and
$\langle\lambda,\alpha^\vee\rangle=0,1,-1\in\Z$ for any $\alpha\in\Pi$.
We see that  $\lambda\in X$.
Since $\lambda\notin Q$ and $[X:Q]=2$, we conclude that $X=\langle Q,\lambda\rangle$.

Below in the left hand side we give the extended Dynkin diagram $\Dtil$ of $\GG_\C$
 with the numbering of vertices of \cite[Table 1]{OV}
(which coincides with the labeling of Bourbaki \cite{Bourbaki}).
We mark in black the roots that appear (with non-integer half-integer coefficients)
in the formula \eqref{e:X-alphas} for $\lambda$.
In the right hand side we give $\Dtil$  with the coefficients $m_j$ from \cite[Table 6]{OV}, see \eqref{eq:sum-mj}:
\[
\xymatrix@1@R=-5pt@C=10pt{
\bc{0} \ar@{-}[rd]
& & && & &\bc{\ell-1}\ar@{-}[ld]\\
&\bc{2}\rline & \bcb{3}  \rline &\cdots\rline  &\bcb{\ell-3}\rline &\bc{\ell-2}\\
\bcbu{1}\ar@{-}[ru] & & && &  &\bcbu{\ell}\ar@{-}[lu] }
\qquad\qquad
\xymatrix@1@R=-5pt@C=10pt{
\bc{1} \ar@{-}[rd] & & && & &\bc{1}\ar@{-}[ld]\\
&\bc{2}\rline & \bc{2}  \rline &\cdots\rline  &\bc{2}\rline &\bc{2}\\
\bcu{1}\ar@{-}[ru] & & && &  &\bcu{1}\ar@{-}[lu] }
 \]

Let $\pp$ be a Kac 2-labeling of the extended Dynkin diagram $\Dtil$.
We say that $\pp$ is {\em even} (resp., {\em odd}), if the sum over the black vertices
\[p_1+p_3+\dots+p_{\ell-3}+p_\ell\]
is even (resp., odd).
If $\qq\in\sK_2$ is a Kac 2-labeling of $\Dtil$, then $K_2^{(\qq)}$ is the set of Kac 2-labelings $\pp$ of the same parity as $\qq$.

The group $X^\vee/Q^\vee=\{\hs 0,\hs[\omega_{\ell-1}^\vee]\hs\}$ acts on $\Dtil$
and on the set $\sK_2$ of Kac 2-labelings of $\Dtil$.
The nontrivial element $\sigma:=[\omega_{\ell-1}^\vee]\in X^\vee/Q^\vee$
acts as the reflection with respect to the vertical  axis of symmetry of $\Dtil$, see Section \ref{s:action}, and clearly preserves the parity of labelings.
We say that a $\sigma$-orbit in $\sK_2$ is even (resp., odd),
if it consists of even (resp., odd) 2-labelings.

Let $\qq$ be a 2-labeling of $\Dtil$.
By Theorem \ref{t:main} the cohomology set $H^1(\R,\hs_\qq G)$ is in a  bijection with the set $\sK_2^{(\qq)}/(X^\vee/Q^\vee)$,
i.e., with the set of $\sigma$-orbits in $\sK_2$ of the same parity as $\qq$.
Thus in order to compute  $H^1(\R,\hs_\qq G)$ for all 2-labelings $\qq$ of $\Dtil$, it suffices to compute
the sets $\Orb^\even(\DD_\ell)$ and $\Orb^\odd(\DD_\ell)$ of the even and odd $\sigma$-orbits, respectively.
We compute also the cardinalities
\[ h^\even(\DD_\ell)=\, \#\Orb^\even(\DD_\ell)\quad\text{ and }\quad h^\odd(\DD_\ell)=\, \#\Orb^\odd(\DD_\ell).\]

We compute $\Orb^\even(\DD_\ell)$. Recall that $\ell=2k$.
For representatives of even $\sigma$-orbits we  take
\[
 \gf{1}{0}\,0 \cdots 0\,\gf{1}{0} \qquad \quad
 \gf{0}{1}\,0 \cdots 0\,\gf{0}{1} \qquad \quad
 \gf{2}{0}\,0 \cdots 0\, \gf{0}{0} \qquad \quad
 \gf{0}{2}\,0 \cdots 0\,\gf{0}{0}
\]
and for each integer $j$ with $0<2j\le k$, the 2-labeling  with $1$ at $2j$.
Thus
\[h^\even(\DD_{2k})=\lfloor k/2\rfloor+4.\]

We compute $\Orb^\odd(\DD_\ell)$.
For representatives of odd $\sigma$-orbits we  take
\[
\gf{1}{1}\,0 \cdots 0\,\gf{0}{0} \qquad\quad
\gf{1}{0}\,0 \cdots 0\, \gf{0}{1}
\]
and for each integer $j$ with $1<2j+1\le k$, the 2-labeling  with $1$ at $2j+1$.
Thus
\[ h^\odd(\DD_{2k})=\lceil k/2\rceil+1.\]

As an example, we give a list of representatives of even and odd orbits for $\DD_6$:
\begin{align*}
\Orb^\even(\DD_6):\qquad &\gf{1}{0}\, 0\,0\,0\, \gf{1}{0}\qquad \gf{0}{1}\, 0\, 0\,0\, \gf{0}{1}\qquad \gf{2}{0}\, 0\,0\,0\, \gf{0}{0}
\qquad \gf{0}{2}\, 0\,0\,0\, \gf{0}{0}\qquad \gf{0}{0}\, 1\,0\,0\, \gf{0}{0}\ ,\\
&\\
\Orb^\odd(\DD_6):\qquad &\gf{1}{1}\, 0\,0\,0\, \gf{0}{0}\qquad \gf{1}{0}\, 0\,0\,0\, \gf{0}{1}\qquad \gf{0}{0}\, 0\,1\,0\, \gf{0}{0}\ .
\end{align*}

Note that if $\ell>4$, our compact half-spin group $\GG$ has no outer automorphisms, hence all its real forms are {\em inner}  forms,
and we have computed the Galois cohomology for all the forms of $\GG$.

Note also that for the compact half-spin group $\GG$ we have
\[ \# H^1(\R,\GG)=h^\even(\DD_{2k})=\lfloor k/2\rfloor+4=\lfloor \ell/4\rfloor+4. \]
For comparison, $\#H^1(\R,\SO_{2\ell})=\ell+1$.
We have $\lfloor \ell/4\rfloor+4=\ell+1$ for an even natural number $\ell$ if and only if $\ell=4$.
(In this case, because of triality, our half-spin group $\GG$ is isomorphic to $\SO_8$.)


\end{document}